\newtheorem{thm}{Theorem}[section]
\newtheorem{cor}[thm]{Corollary}
\newtheorem{prop}[thm]{Proposition}
\theoremstyle{definition}
\newtheorem{defn}[thm]{Definition}
\theoremstyle{remark}
\newtheorem{rem}[thm]{Remark}
\numberwithin{equation}{section}
\begin{document}

\title[Invariant subspaces in the polydisc]
 {Invariant subspaces generated by a single function in the polydisc}

\author[Koca]{Beyaz Ba\c{s}ak Koca}
\address{Istanbul University Faculty of Science Department of Mathematics Istanbul, Turkey}
\email{basakoca@istanbul.edu.tr}

\author[Sadik]{Nazim Sadik}
\address{Istanbul, Turkey} \email{sadnaz@mail.ru}
\subjclass[2010]{Primary 32A35; Secondary 47A15}

\keywords{Invariant subspace, singly-generated, Hardy space,
polydisc, outer function, inner function, unitarily equivalence}


\begin{abstract}
In this study, we partially answer the question left open in Rudin's
book \cite{rudin} on the structure of invariant subspaces of the Hardy space $H^2(U^n)$ on the polydisc $U^n$. We completely describe all
invariant subspaces generated by a single function in the polydisc. Then, using our
results, we give the unitary equivalence of this type of invariant
subspace and a characterization of outer functions in $H^2(U^n)$.
\end{abstract}

\maketitle
\section{Introduction}
In his book \cite[p.78]{rudin}, Rudin posed the question: ``One may
ask for a classification or an explicit description (in some sense)
of all invariant subspaces of $H^2(U^n)$.'' For $n=1$,  Beurling
showed, in his well-known paper \cite{beurling},  that every
invariant subspace $M$ of the Hardy space $H^2(U)$ on the unit disc $U$
is of the form $M=fH^2(U)$ for some inner function $f$, i.e., is
generated by a single inner function. This type of invariant
subspace is called the Beurling type. For $n\geq 2$, the structure
of invariant subspaces of $H^2(U^n)$ is much more complicated. It is
clear that the subspaces generated by an inner function are also
invariant in $H^2(U^n)$. However, it follows from a result of
Jacewicz \cite{jacewicz} that there is an invariant subspace of
$H^2(U^2)$ that is generated by two functions and cannot be
generated by any single function. In addition, Rudin \cite{rudin}
also showed that there are invariant subspaces of $H^2(U^2)$ that
are not even finitely generated. One may ask what additional
conditions are required for an invariant subspace to be generated by
a single function? Sadikov \cite{sadikinv} described the Beurling
type invariant subspace of $H^2(U^n)$ in terms of the classical
theorem of Beurling-Lax-Halmos (see, \cite[Corollary 3.26,
p.53]{radjavi}) by considering the space $H^2(U^n)$ as the
vector-valued analytic functions on the unit disc. Agrawal, Clark,
and Douglas \cite{agrawal} gave a necessary and sufficient condition
for Beurling type invariant subspaces in terms of the unitary
equivalence of invariant subspaces, while Mandrekar \cite{mandrekar}
gave a necessary and sufficient condition in terms of the double
commutativity of shifts. More generally, Sarkar, Sasane, and Brett
\cite{sarkar} showed that an invariant subspace of vector-valued
Hardy space satisfying the doubly commuting property has a
Beurling-Lax-Halmos representation.

In this study, we consider the following questions:
\begin{enumerate}
\item[1)] What is the complete characterization of the function $f\in H^\infty(U^n)$
for the subspace $fH^2(U^n)$ of $H^2(U^n)$ to be invariant?
\item[2)] Is every singly generated invariant subspaces of $H^2(U^n)$
Beurling type?
\end{enumerate}
These questions are answered by Theorems \eqref{first} and
\eqref{second}, respectively. Then, we study some properties of
singly invariant subspaces (inclusions and unitary
equivalence). Moreover, a characterization of outer functions in
$H^2(U^n)$ is given as in the one-variable case.

Before beginning, let us recall some required facts.

Throughout this paper,  $n$ is a positive integer, and
$\mathbb{C}^n$ is the vector space of all ordered $n$-tuples
$z=(z_1,\ldots,z_n)$ of complex numbers with inner product $\langle
z,w\rangle=\sum z_i\bar w_i$, norm $|z|=\langle z,z\rangle^{1/2}$,
and corresponding polydisc
\[U^n=\{z\in \mathbb{C}^n: |z_i|<1,\;\; i=1,\ldots,n\},\]
whose distinguished boundary is the torus
\[T^n=\{z\in \mathbb{C}^n: |z_i|=1,\;\; i=1,\ldots,n \}.\]
If $f$ is in $L^1(T^n)$, the space of all functions integrable with
respect to normalized Lebesgue measure $m_n$ on $T^n$, then its
Poisson integral $P[f]$ is the function
\[P[f]=\int_{T^n}P(z,w)f(w)dm_n(w),\;\; z\in U^n,\]
where $P(z,w)$ is the Poisson kernel.

If $f$ is any function in $U^n$ and $0\leq r <1$, $f_r$ will denote
the function defined on $T^n$ by $f_r(w)=f(rw)$. Then the function
$f^*$ is defined as
\[f^*(w)=\lim_{r\rightarrow 1}f_r(w)\] at every $w\in T^n$ at which
this radial limit exists.

The Hardy space on the polydisc $H^2(U^n)$ is defined as the class
of all holomorphic functions $f$ in $U^n$  for which
\[||f||_2=\sup_{0\leq r<1}\left\{\int_{T^n}|f_r|^2dm_n\right\}^{1/2}<\infty.\]
It is well known that if $f\in H^2(U^n)$, then the radial limit
\[f^*(w)=\lim_{r\rightarrow 1}f_r(w)\]
exists for  almost all $w\in T^n$, and
\[\lim_{r\rightarrow 1}\int_{T^n}|f_r-f^*|^2dm_n=0.\]
As usual, we also treat $H^2(U^n)$ as a closed subspace of
$L^2(T^n)$, the standard Lebesgue space of square-integrable
functions on $T^n$. $H^\infty(U^n)$ is the
space of all bounded holomorphic functions  $f$ in $U^n$;
$||f||_\infty=\sup_{z\in U^n}|f(z)|<\infty$. An inner function in
$U^n$ is a function $f\in H^\infty(U^n)$ with $|f^*|=1$ a.e. on $T^n$. A function $f\in
H^2(U^n)$ is said to be outer if
\[\log |f(0)|=\int_{T^n}\log |f^*|dm_n.\]
Recall that a subspace $M$ of $H^2(U^n)$ is called ``invariant'' if (a) $M$ is a
closed linear subspace of $H^2(U^n)$ and (b) $f\in M$ implies
$z_if\in M$ for $i=1,\ldots,n$; i.e., multiplication by the
variables $z_1,\ldots,z_n$ maps $M$ into $M$. The smallest invariant subspace of $H^2(U^n)$ which contains a given $f$ is denoted by $M(f)$ and $M(f)$ is called the subspace generated by $f$ if $M(f)=fH^2(U^n)$.

The following theorem is required in this study.
\begin{thm}{\cite[Theorem 3.5.3, p.55]{rudin}}\label{boundary}
If $\psi$ is positive, bounded, and lower semi-continuous on $T^n$,
then $\psi=|f^*|$ a.e., for some $f\in H^\infty(U^n)$.
\end{thm}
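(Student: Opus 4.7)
I would combine three ingredients: a monotone approximation of $\log\psi$ by continuous functions, a construction of $H^\infty$ functions with prescribed continuous modulus on the boundary, and a weak-$*$ compactness argument in $H^\infty(U^n)\subset L^\infty(T^n)$.

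\emph{Step 1 (Reduction to bounded data).} Since $\psi>0$ is lsc on compact $T^n$, it attains a positive minimum, so $\psi\geq c>0$. Thus $\log\psi$ is bounded (below by $\log c$, above by $\log\|\psi\|_\infty$) and lsc. A standard fact about lsc functions on compact metric spaces allows us to write $\log\psi=\sup_k\phi_k$ as the pointwise supremum of an increasing sequence of continuous real functions $\phi_k$ on $T^n$ with $\phi_k\leq \log\|\psi\|_\infty$.

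\emph{Step 2 (Continuous case).} For each $k$, construct $g_k\in H^\infty(U^n)$ such that $|g_k^*|=e^{\phi_k}$ a.e.\ on $T^n$. In one variable this is immediate via the Beurling--Szeg\H{o} outer function formula applied to $\phi_k$. For $n\geq 2$ I would aim to iterate the one-variable construction coordinate by coordinate: for fixed $w'=(w_2,\dots,w_n)\in T^{n-1}$, one has an outer function in $z_1$ with boundary modulus $e^{\phi_k(\cdot,w')}$, and then repeat in the remaining variables, taking care that the resulting joint object is holomorphic in all variables simultaneously. If this direct iteration does not yield exact equality, one falls back on approximating $\phi_k$ uniformly by functions of the form $\log|p|$ for polynomials $p$ (or more generally $\log|h|$ for $h\in H^\infty(U^n)$) and then invoking Step~3 at this inner level.

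\emph{Step 3 (Passage to the limit).} The sequence $(g_k)$ satisfies $\|g_k\|_\infty\leq \|\psi\|_\infty$, so by weak-$*$ compactness of the closed ball of $L^\infty(T^n)$, and using that $H^\infty(U^n)$ is weak-$*$ closed in $L^\infty(T^n)$, some subsequence converges weak-$*$ to the boundary trace $f^*$ of an $f\in H^\infty(U^n)$. To verify $|f^*|=\psi$ a.e., the inequality $|f^*|\leq \psi$ should come from $|g_k^*|=e^{\phi_k}\leq \psi$ combined with a Fatou-type lower semicontinuity of the norm under weak-$*$ limits, while the reverse inequality uses the monotone convergence $\phi_k\nearrow \log\psi$ and a geometric-mean argument: $\int_{T^n}\log|f^*|\,dm_n\geq \limsup_k\int\phi_k\,dm_n=\int\log\psi\,dm_n$ by monotone convergence, which combined with $|f^*|\leq\psi$ forces equality a.e.

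\emph{Main obstacle.} The crux is Step~2. In one variable the Szeg\H{o} formula immediately delivers an outer function with any prescribed positive continuous modulus, because $\log|g|$ is the Poisson integral of its boundary values together with a harmonic conjugate. For $n\geq 2$ this fails: a generic continuous function on $T^n$ is not the boundary value of a pluriharmonic function on $U^n$, so one cannot realize $\log|g^*|=\phi_k$ by simply exponentiating a harmonic conjugate. The workaround is to relax ``exact equality'' to ``uniform approximation'' at each finite stage and recover exactness only after taking the weak-$*$ limit in Step~3; the lower semicontinuity of $\psi$ is what enables this approximation-then-limit argument to close.
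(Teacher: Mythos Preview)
The paper does not prove this theorem; it is quoted from Rudin (Theorem~3.5.3) as a known result and then used as a black box in the proofs of the paper's own Theorems. There is therefore no proof in the paper to compare your proposal against.

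On its own merits, your outline has a genuine gap. You already flag Step~2 as unresolved: the one-variable outer-function formula does not iterate coordinatewise to produce a function holomorphic on $U^n$ with prescribed continuous boundary modulus, and the fallback ``approximate $\phi_k$ uniformly by $\log|p|$ for holomorphic polynomials $p$'' is essentially assuming what has to be proved. More seriously, the reverse inequality in Step~3 fails. From weak-$*$ convergence $g_k^{\,*}\to f^{\,*}$ in $L^\infty(T^n)$ one cannot deduce
\[
\int_{T^n}\log|f^{\,*}|\,dm_n\ \ge\ \limsup_{k}\int_{T^n}\log|g_k^{\,*}|\,dm_n;
\]
the opposite phenomenon is typical. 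On $T^1$, for instance, the sequence $g_k(w)=w^k$ converges weak-$*$ to $0$, with $\int_{T}\log|g_k^{\,*}|\,dm_1=0$ for every $k$ but $\int_{T}\log 0=-\infty$. Weak-$*$ limits can thus lose all of the ``log-mass,'' and your geometric-mean closing argument collapses. Rudin's actual proof avoids compactness entirely: he first establishes a one-sided approximation lemma in the continuous case and then builds $f$ as a convergent infinite product, so that $|f^{\,*}|=\psi$ is obtained constructively rather than through a semicontinuity-after-weak-limit argument.
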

For further information on Hardy space on the polydisc, see
\cite{rudin}.

\section{Main Results}
It is clear that the subspace $\varphi H^2(U^n)$ of $H^2(U^n)$ with
some inner function $\varphi$ is invariant. It is natural to ask
whether there exists any function $f$ that is not necessarily inner
such that $fH^2(U^n)$ is invariant. The following theorem describes
the class of all such functions.
Before starting, we give the following definition.
\begin{defn}
  A function $f\in H^\infty(U^n)$ with $f^{-1}\in L^\infty(T^n)$ is called a generalized inner function.
\end{defn}
Here, it is clear that $f^{-1}=1/f^\ast$.
\begin{thm}\label{first}
Let $f\in H^\infty(U^n)$. The subspace $f H^2(U^n)$ of $H^2(U^n)$ is invariant if and only if $f$ is a generalized inner function.
\end{thm}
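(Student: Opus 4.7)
The plan is to prove the two directions separately. The $(\Leftarrow)$ direction amounts to verifying closedness of $fH^2(U^n)$ by passing to the distinguished boundary, while the $(\Rightarrow)$ direction combines the open mapping theorem with Theorem \ref{boundary} to extract a pointwise lower bound on $|f^*|$.

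For sufficiency, assume $1/f^* \in L^\infty(T^n)$. Since $f$ is bounded, $fH^2(U^n)$ is a subspace of $H^2(U^n)$, and invariance under each coordinate function $z_i$ is immediate. For closedness, suppose $fg_k \to h$ in $H^2(U^n)$. Passing to boundary values, $f^*g_k^* \to h^*$ in $L^2(T^n)$; multiplying by $1/f^* \in L^\infty(T^n)$ preserves $L^2$-convergence, so $g_k^* \to h^*/f^*$ in $L^2(T^n)$. Since $H^2(U^n)$ is closed in $L^2(T^n)$, there is some $g \in H^2(U^n)$ with $g^* = h^*/f^*$ a.e., and then $fg$ has boundary value $h^*$, so $fg = h$.

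For necessity, suppose $fH^2(U^n)$ is invariant and $f \not\equiv 0$. The multiplication operator $T_f \colon H^2(U^n) \to H^2(U^n)$, $g \mapsto fg$, is bounded, injective (holomorphic functions on the connected domain $U^n$ form an integral domain), and has closed range $fH^2(U^n)$. By the open mapping theorem, $T_f$ is bounded below, so there exists $c > 0$ with
\[\int_{T^n} \bigl(|f^*|^2 - c^2\bigr)|g^*|^2\, dm_n \geq 0 \quad \text{for every } g \in H^2(U^n).\]
To convert this into a pointwise statement, fix any open $U \subseteq T^n$ with $m_n(U) > 0$ and any $M > 1$, and set $\psi_M = 1 + M\chi_U$; this is positive, bounded, and lower semi-continuous on $T^n$. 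By Theorem \ref{boundary}, there is $g_M \in H^\infty(U^n) \subseteq H^2(U^n)$ with $|g_M^*| = \psi_M$ a.e. Substituting $g = g_M$ into the inequality above, dividing by $M^2$, and letting $M \to \infty$ gives $\int_U |f^*|^2\, dm_n \geq c^2\, m_n(U)$ for every such open set. By outer regularity of $m_n$, this bound extends to every measurable subset of $T^n$, forcing $|f^*| \geq c$ a.e.; equivalently, $1/f^* \in L^\infty(T^n)$.

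The main obstacle is that Theorem \ref{boundary} only produces an $H^\infty$ function with prescribed boundary modulus when that modulus is lower semi-continuous, and one cannot directly concentrate such a $\psi$ on an arbitrary measurable set where $|f^*|$ happens to be small. The workaround is the two-step approximation described above: use lower semi-continuous weights of the form $1 + M\chi_U$ (which can concentrate on any open set of positive measure), and then pass from open sets to arbitrary measurable sets via outer regularity of Lebesgue measure on the torus.
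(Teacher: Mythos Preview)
Your proof is correct and follows essentially the same strategy as the paper: both directions hinge on $M_f$ being bounded below, and the pointwise lower bound on $|f^*|$ is extracted by testing against functions produced by Theorem~\ref{boundary}. The only difference is cosmetic---the paper approximates $\chi_E$ (for the measurable set $E=\{|f^*|<\delta_0\}$) by continuous functions $\varphi_j$ and applies dominated convergence, whereas you use the lower semi-continuous weights $1+M\chi_U$ on open sets and then pass to measurable sets via outer regularity; both are equivalent workarounds for the lower semi-continuity hypothesis in Theorem~\ref{boundary}.
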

\begin{proof}
Let $M_f$ denote the bounded linear operator on $H^2(U^n)$ given by
$M_f(g)=fg$ for any $g\in H^2(U^n)$. Suppose that the subspace $f
H^2(U^n)$ is invariant. Since $\mbox{Ker}M_f=\{0\}$ and the image of
$M_f$, $f H^2(U^n)$ is closed, $M_f$ is bounded below, that is,
there exists a number $\delta>0$ such that $||M_fg||_2\geq
\delta||g||_2$ for any $g\in H^2(U^n)$. Then $|f|\geq \delta$ a.e.
on $T^n$. In fact, assume that $m_n\{\xi\in T^n: |f|<\delta\}>0$.
Then $m_n\{\xi\in T^n: |f|<\delta_0\}>0$ for some
$\delta_0\in(0,\delta)$. Let us fix such a $\delta_0$ and put
$E=\{\xi\in T^n:|f(\xi)|<\delta_0\}$. We can construct a sequence of
continuous functions $\{\varphi_j\}_{j\geq1}$ defined on $T^n$ such
that $0<\varphi_j\leq 1$ and
$\lim_{j\rightarrow\infty}\varphi_j=\chi_E$ a.e. on $T^n$, where
$\chi_E$ denotes the characteristic function of $E$. By Theorem
\eqref{boundary} for each $j$ there exists a function $g_j\in
H^\infty(U^n)\subset H^2(U^n)$ such that $|g_j|=\varphi_j$ a.e. on
$T^n$. We get
\[\delta^2\int_{T^n}|g_j|^2dm_n\leq\int_{T^n}|f|^2|g_j|^2dm_n\]
for all $j$. Applying the Lebesgue dominated convergence theorem we
have
\[\delta^2m_n(E)\leq \int_E|f|^2dm_n\leq \delta_0^2m_n(E)<\delta^2m_n(E),\]
and we get a contradiction. Hence $|f|\geq \delta$ a.e. on $T^n$,
i.e., $f^{-1}\in L^\infty(T^n)$. Conversely, suppose that $f^{-1}\in
L^\infty(T^n)$. It is clear that  for the invariance of $fH^2(U^n)$,
it is enough to show the closedness of the subspace $f H^2(U^n)$.
For this, we show that $M_f$ is bounded below, that means there
exists a number $c>0$ such that $||M_fg||_2\geq c||g||_2$ for any
$g\in H^2(U^n)$. In fact, for any $g\in H^2(U^n)$ we obtain
\[||g||_2=||f^{-1}f g||_2\leq ||f^{-1}||_\infty\cdot||fg||_2=||f^{-1}||_\infty\cdot||M_fg||_2,\]
and the proof is complete.
\end{proof}
Remark that this theorem shows that every singly generated invariant subspaces of $H^2(U^n)$ is generated by a generalized inner function.
It is clear that the class of inner functions is properly contained
in the class of generalized inner function. 
It is
natural to ask whether every singly generated invariant subspaces is generated by an inner function, i.e., is Beurling type.
The following construction leads to an affirmative answer:
\begin{thm}\label{second}
There exists a generalized inner function $f$ such that $f H^2(U^n)\neq  IH^2(U^n)$ for any inner
function $I$.
\end{thm}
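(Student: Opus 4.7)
The strategy is, for $n\geq 2$, to construct an explicit generalized inner function $f$ whose boundary modulus $|f^*|$ is ``twisted'' enough that $f$ cannot equal an inner function times a unit of the algebra $H^\infty(U^n)$; this will turn out to be the precise obstruction to $fH^2(U^n)$ being of Beurling type.

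First, I would reduce the equality $fH^2(U^n)=IH^2(U^n)$ to an algebraic condition on $f$. Writing $f=Ig$ with $g\in H^2(U^n)$, the identity $|g^*|=|f^*|$ a.e.\ on $T^n$ forces $g\in H^\infty(U^n)$; writing $I=fh$ and using $|f^*|\geq\delta$ from Theorem~\ref{first} gives $h\in H^\infty(U^n)$ with $gh=1$ a.e., so $g$ is a unit of $H^\infty(U^n)$. In particular $g$ is zero-free on the simply connected polydisc $U^n$, hence admits a global holomorphic logarithm, so $\log|g|$ is a \emph{bounded pluriharmonic} function on $U^n$ with boundary values $\log|f^*|$ a.e.\ on $T^n$. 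By uniqueness of bounded harmonic extensions, $\log|g|=P[\log|f^*|]$, and therefore a necessary condition for $fH^2(U^n)$ to be of Beurling type is that the Poisson integral $P[\log|f^*|]$ be pluriharmonic on $U^n$.

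Second, I would exhibit $f$ for which this necessary condition fails. Set $\phi(\zeta)=\exp(\mathrm{Re}(\zeta_1\bar\zeta_2))$ on $T^n$; this is positive, continuous, bounded, and bounded away from zero. Theorem~\ref{boundary} furnishes $f\in H^\infty(U^n)$ with $|f^*|=\phi$ a.e., and $\phi\geq e^{-1}$ makes $f$ generalized inner. Since the polydisc Poisson kernel factors over the coordinates, a direct computation gives $P[\log\phi](z)=\mathrm{Re}(z_1\bar z_2)$, whose mixed derivative $\partial^2/(\partial z_1\,\partial\bar z_2)$ equals $\tfrac12\neq 0$; hence $P[\log\phi]$ is not pluriharmonic, and by the first step no inner $I$ can satisfy $fH^2(U^n)=IH^2(U^n)$. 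The main hurdle I foresee is the structural reduction in the first step, specifically the identification of $\log|g|$ with the Poisson integral $P[\log|f^*|]$; once that is in hand, the choice of $\phi$ and the final pluriharmonicity check are short verifications.
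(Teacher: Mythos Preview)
Your proof is correct and follows essentially the same strategy as the paper's: both reduce the equality $fH^2(U^n)=IH^2(U^n)$ to the pluriharmonicity of $P[\log|f^*|]$, and then manufacture $f$ via Theorem~\ref{boundary} from a boundary modulus for which this fails. The only differences are cosmetic---the paper reaches pluriharmonicity by observing that $fI^{-1}$ is outer (citing \cite[Theorem~4.4.6]{rudin}) rather than via your direct unit-of-$H^\infty$ argument, and it leaves the choice of boundary modulus abstract where you supply the explicit $\phi(\zeta)=\exp(\mathrm{Re}(\zeta_1\bar\zeta_2))$.
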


\begin{proof}
We can take a function $h\in C(T^n)$ such that $h>0$ everywhere and
the Poisson integral of $\log h$ is not pluriharmonic. By Theorem
\eqref{boundary}, there exists a function $f\in H^\infty(U^n)$ such
that $|f^*|=h$ almost everywhere on $T^n$. Suppose that $f
H^2(U^n)=IH^2(U^n)$ for some inner function $I$. Then, $fI^{-1}
H^2(U^n)=H^2(U^n)$, hence $fI^{-1}$ is outer by \cite[p.74, Theorem
4.4.6]{rudin}. It follows that the equality
\[\mbox{Re}\log(fI^{-1})=\log|fI^{-1}|=P[\log|(fI^{-1})^*|]=P[\log|f^*|]=P[\log h],\]
is satisfied, showing that $P[\log h]$ is pluriharmonic, and we get
a contradiction, and the proof is complete.
\end{proof}

\begin{rem}
By the proof of Theorem \eqref{second}, we have the function $f\in
H^\infty (U^n)$, $f\not\equiv 0$ and $|f^*|=h$ a.e. on $T^n$, where $h$ is
continuous on $T^n$. Then, by \cite[Theorem 5.4.5, p. 121]{rudin},
$f$ has the same zeros as some inner function $u$. This means that
$f=f_1u$, $f_1$ is holomorphic in $U^n$, and $f_1$ has no zero in
$U^n$. However, since $P[\log|f^*|]=P[\log h]$ is not the real part
of any holomorphic function in $U^n$, $f$ can not be written as the product of an
inner function and an outer function by \cite[Exercise 5.5.7 (a),
p.130]{rudin}.
\end{rem}

\begin{prop}\label{prop}
Let $f_1$ and $f_2$ be two generalized inner functions. The invariant subspaces $f_1H^2(U^n)$ and $f_2
H^2(U^n)$ satisfy the following conditions:
\begin{enumerate}
\item[(a)] $f_1H^2(U^n)\subset f_2 H^2(U^n)$ if and only if
$f_1f_2^{-1}\in H^\infty(U^n)$ with $f_2f_1^{-1}\in L^\infty(T^n)$.
\item[(b)] $f_1H^2(U^n)= f_2 H^2(U^n)$ if and only if
$f_1f_2^{-1}, f_2f_1^{-1}\in H^\infty(U^n)$.
\end{enumerate}
\end{prop}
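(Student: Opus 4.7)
The plan is to establish part (a) directly from the definitions and then derive (b) as an immediate corollary. The common machinery is twofold: the assumption that $f_1$ and $f_2$ are generalized inner gives $f_i^{-1}\in L^\infty(T^n)$, which controls the boundary quotient $f_1^*/f_2^*$; and any $h\in H^2(U^n)$ with $h^*\in L^\infty(T^n)$ in fact lies in $H^\infty(U^n)$, as one sees from the Poisson representation $h=P[h^*]$ in $U^n$ (a standard fact from \cite{rudin}).

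For the forward direction of (a), I would assume $f_1H^2(U^n)\subset f_2H^2(U^n)$ and apply the inclusion to the constant function $1\in H^2(U^n)$. This produces some $h\in H^2(U^n)$ with $f_1=f_2h$ in $U^n$, and on the torus $h^*=f_1^*/f_2^*$ a.e. The bound
\[
|h^*|\le\|f_1\|_\infty\cdot\|f_2^{-1}\|_\infty\quad\text{a.e.\ on }T^n
\]
then shows $h^*\in L^\infty(T^n)$, which upgrades $h$ from $H^2(U^n)$ to $H^\infty(U^n)$; this $h$ is exactly the holomorphic representative required by the condition $f_1f_2^{-1}\in H^\infty(U^n)$. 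The companion statement $f_2f_1^{-1}\in L^\infty(T^n)$ is automatic from the generalized inner hypothesis on $f_1$, since $f_2\in H^\infty(U^n)$ has bounded boundary values and $f_1^{-1}\in L^\infty(T^n)$ by definition.

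For the reverse direction of (a), let $\varphi\in H^\infty(U^n)$ be the holomorphic extension of $f_1f_2^{-1}$. Then $f_1^*=f_2^*\varphi^*$ a.e.\ on $T^n$, and since both sides are boundary values of holomorphic functions in $U^n$, the identity $f_1=f_2\varphi$ holds throughout $U^n$. For any $g\in H^2(U^n)$, the product $\varphi g$ remains in $H^2(U^n)$ (multiplication by an $H^\infty$ function preserves $H^2$), and $f_1g=f_2(\varphi g)\in f_2H^2(U^n)$, which yields the desired inclusion. Part (b) then follows by applying (a) symmetrically to the two inclusions $f_1H^2(U^n)\subset f_2H^2(U^n)$ and $f_2H^2(U^n)\subset f_1H^2(U^n)$; the $L^\infty$ side conditions in (a) are automatic under the generalized inner hypothesis, so only the two $H^\infty$ membership conditions stated in (b) remain.

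The main technical point I expect to dwell on is the step promoting the $H^2(U^n)$ solution $h$ in the forward direction to an $H^\infty(U^n)$ function using only boundedness of its boundary values. In one variable this is classical, while in the polydisc it rests on the Poisson representation for $H^p(U^n)$ functions together with the fact that the boundary map $f\mapsto f^*$ identifies $H^p(U^n)$ with a closed subspace of $L^p(T^n)$. Once this step is granted, the remainder of the argument is essentially algebraic manipulation of boundary quotients.
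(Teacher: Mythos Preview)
Your proof is correct and follows essentially the same route as the paper: obtain $h\in H^2(U^n)$ with $f_1=f_2h$ from the inclusion applied to $1$, use boundedness of $f_1^*f_2^{-1}$ to promote $h$ to $H^\infty(U^n)$, observe that $f_2f_1^{-1}\in L^\infty(T^n)$ is automatic, and for the converse multiply through by the $H^\infty$ quotient; part (b) then follows symmetrically. The only difference is that you spell out the Poisson-representation step for the $H^2$-to-$H^\infty$ promotion, which the paper leaves implicit.
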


\begin{proof}
\begin{enumerate}
\item[(a)] Assume that $f_1H^2(U^n)\subset f_2 H^2(U^n)$. Then,
$f_1\in f_2 H^2(U^n)$ and thus $f_1=f_2g$ for some $g\in H^2(U^n)$.
Moreover, $f_1\in H^\infty(U^n)$ and $f_2^{-1}\in L^\infty(T^n)$
gives $g= f_1f_2^{-1}\in H^\infty(U^n)$. On the other hand, if
$f_1^{-1}\in L^\infty(T^n)$, then $g^{-1}=f_1^{-1}f_2\in
L^\infty(T^n)$. Conversely, if $f_1=f_2h$, where $h\in
H^\infty(U^n)$ with $h^{-1}\in L^\infty(T^n)$, then $f_1f=f_2(hf)$
for any $f\in H^2(U^n)$ and therefore $f_1H^2(U^n)\subset f_2
H^2(U^n)$.
\item[(b)] It is easily seen from (a).
\end{enumerate}
\end{proof}

Beurling's theorem also states (in the one-variable case)
that $f$ is outer if and only if $fH^2(U)=H^2(U)$. Rudin proved that
one part of this theorem holds, if $n>1$ (\cite[Theorem 4.4.6,
p.74]{rudin}: $f$ is outer, if $fH^2(U^n)= H^2(U^n)$), but he also
showed in \cite[Theorem 4.4.8 (b), p.76]{rudin} that there exists an
outer function $f\in H^2(U^2)$ such that $fH^2(U^2)\neq H^2(U^2)$,
which implies that the converse of the theorem fails if $n>1$. The
following result gives a class of functions for which Beurling's
theorem holds for $n>1$.

\begin{cor}\label{outer}
If $f$ is a generalized inner function, then $f$ is
outer if and only if $fH^2(U^n)= H^2(U^n)$.
\end{cor}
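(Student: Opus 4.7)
The reverse direction, that $fH^2(U^n) = H^2(U^n)$ implies $f$ is outer, is already contained in \cite[Theorem 4.4.6]{rudin} and requires no hypothesis on $f$ beyond membership in $H^2(U^n)$. I would therefore focus on the forward implication and reduce it via Proposition \ref{prop}(b) applied with $f_1 = f$ and $f_2 \equiv 1$ (both generalized inner): the equality $fH^2(U^n) = H^2(U^n)$ is equivalent to $f, f^{-1} \in H^\infty(U^n)$. Since $f \in H^\infty(U^n)$ is immediate, the task collapses to producing $1/f$ as a bounded holomorphic function on $U^n$. The boundary values $1/f^*$ are already bounded (by $\norm{f^{-1}}_\infty$), so the remaining issue is to rule out zeros of $f$ inside $U^n$.

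To do so, I would appeal to the polydisc Jensen inequality $\log|f(z)| \leq P[\log|f^*|](z)$ for $z \in U^n$, available because $\log|f^*|$ is bounded (hence in $L^1(T^n)$) for a generalized inner function. The outer hypothesis on $f$ says precisely that $\log|f(0)| = \int_{T^n}\log|f^*|\,dm_n = P[\log|f^*|](0)$, so the non-negative function
\[
v(z) = P[\log|f^*|](z) - \log|f(z)|
\]
vanishes at the origin. Since $P[\log|f^*|]$ is $n$-harmonic and $\log|f|$ is plurisubharmonic, hence separately subharmonic in each variable, $v$ is separately superharmonic on $U^n$.

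The crux of the argument, and the step I expect to be the real obstacle, is to upgrade the single-point vanishing $v(0) = 0$ into the identity $v \equiv 0$ on $U^n$, using only separate superharmonicity rather than pluriharmonicity. I would handle this by coordinate-by-coordinate slicing, invoking the one-variable minimum principle at each step: along the $z_1$-slice $(z_1, 0, \ldots, 0)$, $v$ is a non-negative superharmonic function of $z_1 \in U$ vanishing at the origin, hence vanishes on the whole slice; for any fixed $z_1 \in U$, the $z_2$-slice $(z_1, z_2, 0, \ldots, 0)$ then has the same property by the previous step, and so on. After $n$ iterations, $v \equiv 0$ on $U^n$, which yields $\log|f(z)| = P[\log|f^*|](z) \geq \log \delta > -\infty$ (with $\delta^{-1} = \norm{f^{-1}}_\infty$), i.e.\ $|f(z)| \geq \delta > 0$ throughout $U^n$. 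Consequently $1/f$ is holomorphic and bounded on $U^n$, and Proposition \ref{prop}(b) closes the argument.
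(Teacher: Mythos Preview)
Your argument is correct and follows the paper's own proof almost verbatim: both invoke \cite[Theorem 4.4.6]{rudin} for the reverse implication, and for the forward direction both reduce via Proposition~\ref{prop}(b) to showing $f^{-1}\in H^\infty(U^n)$ by establishing $\log|f|=P[\log|f^*|]$ on $U^n$ and then bounding $|f|$ from below. The only difference is that the paper cites this identity directly from \cite[p.~73]{rudin} as a known characterization of outer functions, whereas you supply a self-contained derivation via Jensen's inequality and the coordinate-wise minimum principle.
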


\begin{proof}
If $fH^2(U^n)= H^2(U^n)$, then that the function $f$ is outer is
obvious by \cite[Theorem 4.4.6, p.74]{rudin}. Conversely, suppose
that $f$ is outer. Then, by \cite[p.73]{rudin},
\begin{equation}\label{equation}
\log|f|=P[\log|f^*|]
\end{equation}
which implies that $f$ has no zero in $U^n$, hence $f^{-1}$ is
analytic in $U^n$. On the other hand, multiplying equality
\eqref{equation} by $(-1)$, we obtain that the same equality is true
for $f^{-1}$, that is, $\log|f^{-1}|=P[\log|(f^{-1})^*|]$. Since
$(f^*)^{-1}\in L^\infty(T^n)$, $\log|(f^{-1})^*|\in L^\infty(T^n)$,
and then its Poisson integral $P[\log|(f^{-1})^*|]$ is bounded on
$U^n$ by \cite[Theorem 2.1.3, p.18]{rudin}. Hence, $f^{-1}$ is
bounded on $U^n$, and $f^{-1}\in H^\infty(U^n)$. Applying
Proposition \eqref{prop} (b) to $f_1=f$ and $f_2=1$, we have
$fH^2(U^n)= H^2(U^n)$.
\end{proof}

Agrawal, Clark, and Douglas \cite{agrawal} studied the question of
unitary equivalence of invariant subspaces of $H^2(U^n)$. Two
invariant subspaces $M_1$ and $M_2$ are said to be unitarily
equivalent, if there is a unitary operator $U: M_1\rightarrow M_2$
such that $U(\theta f)=\theta(Uf)$, for $\theta\in H^\infty(U^n)$
and $f\in M_1$. Under the assumption of full range of an invariant
subspace $M$ (for definition, see \cite[p.5]{agrawal}), they showed
that all invariant subspaces $N$ of $H^2(U^n)$ unitarily equivalent
to $M$ are of the form $\varphi M$ for some inner function
$\varphi$. However, the Beurling type invariant subspaces do not
have full range \cite[Remark 3, p.6]{agrawal}, and Mandrekar proved
unitary equivalence of this type of invariant subspace in
\cite[Theorem 4]{mandrekar}. In the following theorem, which covers
to Mandrekar's result, we now consider the unitary equivalence of
the singly generated invariant subspaces.

\begin{thm}
\begin{itemize}
\item[(a)] Let $f_1$ and $f_2$ be two generalized inner functions. The invariant subspaces $f_1H^2(U^n)$ and
$f_2H^2(U^n)$ are unitarily equivalent if $|f_1|=|f_2|$ a.e. on
$T^n$.
\item[(b)] Any invariant subspace $M$ of $H^2(U^n)$ unitarily
equivalent to a singly generated invariant subspace is also singly
generated.
\end{itemize}
\end{thm}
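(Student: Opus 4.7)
For part (a), the plan is to verify that the natural candidate $U : f_1 H^2(U^n) \to f_2 H^2(U^n)$ given by $U(f_1 h) = f_2 h$ for $h \in H^2(U^n)$ is the required unitary. The formula is unambiguous because $|f_1|$ is bounded below on $T^n$, so $M_{f_1}$ is injective on $H^2(U^n)$; the map is visibly surjective and commutes with multiplication by any $\theta \in H^\infty(U^n)$. Isometry reduces to
\[
||f_1 h||_2^2 = \int_{T^n} |f_1|^2 |h|^2 dm_n = \int_{T^n} |f_2|^2 |h|^2 dm_n = ||f_2 h||_2^2,
\]
which uses only the hypothesis $|f_1|=|f_2|$ a.e.\ on $T^n$.

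For part (b), suppose $V : f_1 H^2(U^n) \to M$ is a unitary intertwining multiplication by $H^\infty(U^n)$, with $f_1$ generalized inner (which is forced by Theorem \eqref{first}). I would set $g := V(f_1) \in H^2(U^n)$ and aim to show that $g$ is itself generalized inner and that $M = g H^2(U^n)$. The intertwining immediately gives $V(\theta f_1) = \theta g$ for every $\theta \in H^\infty(U^n)$, so the isometry property of $V$ rewrites as
\[
\int_{T^n} |\theta|^2 \bigl(|g|^2 - |f_1|^2\bigr) dm_n = 0, \quad \theta \in H^\infty(U^n).
\]
The main obstacle is upgrading this family of integral identities to the pointwise equality $|g^*| = |f_1^*|$ a.e.\ on $T^n$. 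Polarizing the quadratic form $p \mapsto \int_{T^n} |p|^2 (|g|^2 - |f_1|^2) dm_n$ on polynomials yields $\int_{T^n} p \bar q \,(|g|^2 - |f_1|^2) dm_n = 0$ for all polynomials $p,q$; Stone--Weierstrass density of the span of $\{z^\alpha \bar z^\beta\}$ in $C(T^n)$ then forces the signed $L^1$-measure $(|g|^2 - |f_1|^2) dm_n$ to be zero.

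With $|g^*| = |f_1^*|$ a.e.\ in hand, the generalized inner property of $f_1$ transfers to $g$: $|g^*|$ is bounded above and bounded away from $0$, and since $g \in H^2(U^n)$ the Poisson representation $g(z) = \int_{T^n} P(z,w) g^*(w) dm_n(w)$ gives $|g(z)| \leq ||g^*||_\infty$ on $U^n$, so $g \in H^\infty(U^n)$ with $g^{-1} \in L^\infty(T^n)$. To finish, note that $f_1 H^\infty(U^n)$ is dense in $f_1 H^2(U^n)$ (approximate any $h \in H^2(U^n)$ by polynomials in $L^2$-norm and multiply by $f_1 \in H^\infty$), and on this dense set $V$ acts by $V(f_1 p) = p g$; continuity of $V$ together with boundedness of multiplication by $g$ extends this to $V(f_1 h) = h g$ for every $h \in H^2(U^n)$, so $M = V(f_1 H^2(U^n)) = g H^2(U^n)$, which is singly generated as required.
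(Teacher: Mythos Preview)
Your proof is correct. Part (a) matches the paper's argument exactly: the unitary $U(f_1 h)=f_2 h$ is precisely the multiplication operator $M_\psi$ with $\psi=f_2 f_1^{-1}$, which is unimodular by the hypothesis $|f_1|=|f_2|$ a.e.

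For part (b), however, your route differs from the paper's. The paper dispatches this case in one line by invoking Lemma~1 of Agrawal--Clark--Douglas \cite{agrawal}, which asserts directly that any unitary $H^\infty$-module equivalence between invariant subspaces of $H^2(U^n)$ is implemented by multiplication by a unimodular function $\psi$; one then sets $g=\psi f_1$ and reads off that $g$ is generalized inner. You instead reprove this lemma from scratch in the special case at hand: the polarization-plus-Stone--Weierstrass step gives $|g^*|=|f_1^*|$ a.e., which is exactly the statement that $g f_1^{-1}$ is unimodular, and your Poisson-integral argument recovers $g\in H^\infty(U^n)$. The payoff of your approach is that it is entirely self-contained and makes no appeal to outside literature; the paper's approach is shorter but leans on the cited lemma.
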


\begin{proof}
\begin{itemize}
\item[(a)] Let $|f_1|=|f_2|$ a.e. on $T^n$. Taking the multiplication operator
$M_\psi:f_1H^2(U^n)\rightarrow f_2H^2(U^n)$, where
$\psi:=f_2f_1^{-1}$ -unimodular function, as a desired unitary
operator, we easily obtain unitary equivalence of these subspaces.
\item[(b)] If the invariant subspace $M$ of $H^2(U^n)$ unitarily
equivalent to $fH^2(U^n)$ for some generalized inner function $f$,
then, by Lemma 1 in \cite{agrawal} there exists a unimodular
function $\psi$ such that $M=\psi fH^2(U^n)$ and so, the subspace
$M$ is an invariant subspace generated by $g:=\psi f \in
H^\infty(U^n)$ with $g^{-1}\in L^\infty(T^n)$.
\end{itemize}
\end{proof}

It is well known that every nonzero function $f$ in $H^2(U)$ has a
factorization $f=gh$, where $g$ is inner and $h$ is outer. This is
no longer true for $H^2(U^n)$ for $n>1$ \cite[Theorem 4.2.2.,
p.65]{rudin}. One can naturally asked whether a generalized
statement may hold true, namely, every nonzero function $f$ in $
H^2(U^n)$ can be written as the product $f=gh$, where~$h$ is still
an outer function, but where~$g$ is now a {\em generalized} inner
function. According to \cite[Theorem 4.1.1, p. 60]{rudin}, there
exists an $f$ in $H^2(U^n)$, $f\not\equiv 0$ such that every bounded
holomorphic function vanishing on the zero set of $f$ is equal to 0
identically. Thus, the generalized factorization does not hold for
such $f$.
We do not know whether a generalized factorization holds true for arbitrary bounded~$f\not\equiv 0$ on~$U^n$:\\

\textbf{Problem: } Can every non-zero function $f$ in $
H^\infty(U^n)$ be written as the product $f=gh$, where $g$ is a
generalized inner function, $h$ is an outer function?
\subsection*{Acknowledgements} The authors are grateful to A.B.
Alexandrov for helpful discussions and comments.

\end{document}